\documentclass[11pt]{amsart}

\usepackage[margin=1.25in]{geometry}

\usepackage{graphicx}

\usepackage{amsmath, amssymb, amsthm, graphicx, enumerate, tikz, float, bbm, nicefrac}
\usepackage{mathtools, comment}
\usepackage[colorlinks]{hyperref}
\hypersetup{citecolor=blue}
\usetikzlibrary{matrix,arrows,decorations.pathmorphing}

\newtheorem{theorem}{Theorem}[section]

\newtheorem{corollary}[theorem]{Corollary}
\newtheorem{proposition}[theorem]{Proposition}

\theoremstyle{definition}
\newtheorem{definition}[theorem]{Definition}
\newtheorem{example}[theorem]{Example}

\newtheorem*{solution*}{Solution}

\theoremstyle{remark}
\newtheorem{remark}[theorem]{Remark}
\newtheorem*{pf*}{Pf}
\newtheorem*{pfbase*}{Proof of Base Case}
\newtheorem*{pfstep*}{Proof of Inductive Step}

\numberwithin{equation}{section}

\DeclareMathOperator{\Hom}{Hom}
\DeclareMathOperator{\Hg}{H}
\DeclareMathOperator{\Der}{Der}
\DeclareMathOperator{\Inn}{Inn}
\DeclareMathOperator{\Ig}{Im}
\DeclareMathOperator{\Ker}{Ker}
\DeclareMathOperator{\id}{id}

\begin{document}

\allowdisplaybreaks

\title[Deformation theories controlled by Hochschild cohomologies]{Deformation theories controlled by Hochschild cohomologies}

\author{Samuel Carolus}
\address{Department of Mathematics and Statistics, Ohio Northern University, Ada, Ohio 45810}
\email{s-carolus@onu.edu}

\author{Samuel A. Hokamp}
\address{Department of Mathematics, St. Norbert College, De Pere, Wisconsin 54115}
\email{samuel.hokamp@snc.edu}

\author{Jacob Laubacher}
\address{Department of Mathematics, St. Norbert College, De Pere, Wisconsin 54115}
\email{jacob.laubacher@snc.edu}

\subjclass[2010]{Primary 16S80; Secondary 16E40}

\date{\today}

\keywords{Deformations of algebras, higher order Hochschild cohomology, tertiary Hochschild cohomology}

\begin{abstract}
We explore how the higher order Hochschild cohomology controls a deformation theory when the simplicial set models the 3-sphere. Besides generalizing to the $d$-sphere for any $d\geq1$, we also investigate a deformation theory corresponding to the tertiary Hochschild cohomology, which naturally reduces to those studied for the secondary and usual Hochschild cohomologies under certain conditions.
\end{abstract}

\maketitle

\section{Introduction}

Higher order Hochschild (co)homology was implicitly defined by Anderson in \cite{A}, and was given an explicit description in \cite{P}. The case for when the simplicial set models the $d$-sphere was investigated more extensively in \cite{G}. A deformation theory for the algebra $A[[t]]$ controlled by the higher order Hochschild cohomology over the 2-sphere was studied in \cite{CS}. One of the goals of this paper is to generalize their argument.

In Section \ref{SectionHigh} we use the simplicial structure for the 3-sphere presented in \cite{CL}, and also use their natural extension when considering the $d$-sphere for any $d\geq1$. We show how the higher order Hochschild cohomology over the $d$-sphere controls a deformation theory, giving special attention to the case when $d=3$.

When the simplicial set models $S^1$, it is well known that one recovers the usual Hochschild cohomology, which was introduced in 1945 in \cite{H}. Almost twenty years later in \cite{Ger}, Gerstenhaber used this Hochschild cohomology, denoted $\Hg^*(A,A)$, to describe deformations of the algebra $A[[t]]$.  That is, for a multiplication law $m_t:A[[t]]\otimes A[[t]]\longrightarrow A[[t]]$ determined by $m_t(a\otimes b)=ab+c_1(a\otimes b)t+c_2(a\otimes b)t^2+\cdots$ with $\mathbbm{k}$-linear maps $c_i:A\otimes A\longrightarrow A$, one sees that $m_t$ is associative $mod~t^2$ if and only if $c_1$ is a $2$-cocycle. As is classical, the class of $c_1$ is determined by the isomorphism class of $m_t$. Finally, assuming associativity $mod~t^{n+1}$, the obstruction for associativity $mod~t^{n+2}$ is an element in $\Hg^3(A,A)$. 

In 2016, Staic showed in \cite{S} that when one wants to study deformations of $A[[t]]$ that have a nontrivial $B$-algebra structure, one can use the secondary Hochschild cohomology. This cohomology theory has the property that when one takes $B=\mathbbm{k}$, one recovers the usual Hochschild cohomology.

In Section \ref{SectionTert} we study deformations of $A[[t]]$ that have nontrivial $B$-algebra and $C$-algebra structures. This is done using the tertiary Hochschild cohomology, which was introduced in \cite{CL}. This tertiary Hochschild cohomology depends on a morphism of commutative $\mathbbm{k}$-algebras $\theta:C\longrightarrow B$.  This morphism induces a $B$-algebra and $C$-algebra structure on $A$ by way of the morphisms $\varepsilon:B\longrightarrow A$ and $\varepsilon\circ\theta:C\longrightarrow A$, respectively. We show that this is equivalent to having a family of products satisfying a generalized associativity condition. In particular, when one takes $C=\mathbbm{k}$, one recovers exactly the result in \cite{S}. Also, as a natural extension, we discuss deformations of $A[[t]]$ with $n$ nontrivial algebra structures for any $n\geq1$.

\section{Preliminaries} 

Fix $\mathbbm{k}$ to be a field and denote $\otimes:=\otimes_\mathbbm{k}$ and $\Hom(-,-):=\Hom_\mathbbm{k}(-,-)$. Furthermore, we set $A$ to be an associative $\mathbbm{k}$-algebra with multiplicative unit.

For $d\geq1$, we begin by recalling the chain complex associated to the higher order Hochschild cohomology of the commutative $\mathbbm{k}$-algebra $A$ with coefficients in the $A$-symmetric $A$-bimodule $M$ over the $d$-sphere $S^d$. We denote the complex
\begin{equation}\label{SdComplex}
0\to M\xrightarrow{\delta_0}\ldots\xrightarrow{\delta_{d-2}}M\xrightarrow{\delta_{d-1}}\Hom(A,M)\xrightarrow{\delta_d}\Hom(A^{\otimes d+1},M)\xrightarrow{\delta_{d+1}}\ldots
\end{equation}
by $\mathbf{C}_{S^d}^\bullet(A,M)$. It will be of particular interest when one takes $M=A$. Moreover, we focus on the map $\delta_d:\Hom(A,M)\longrightarrow\Hom(A^{\otimes d+1},M)$. For any $\mathbbm{k}$-linear map $f:A\longrightarrow M$, we have that
\begin{equation}\label{SdMap}
\begin{aligned}
\delta_d(f)(a_1\otimes\cdots\otimes a_{d+1})&=a_1\cdots a_df(a_{d+1})\\
&\hspace{.25in}+\sum_{i=1}^{d}(-1)^ia_1\cdots a_{d-i}f(a_{d+1-i}a_{d+2-i})a_{d+3-i}\cdots a_{d+1}\\
&\hspace{.25in}+(-1)^{d+1}f(a_1)a_2\cdots a_{d+1}.\\
\end{aligned}
\end{equation}

\begin{definition}\label{SdDefn}(\cite{A},\cite{P})
The cohomology of the complex $\mathbf{C}_{S^d}^\bullet(A,M)$ is called the \textbf{higher order Hochschild cohomology} of $A$ with coefficients in $M$ over the $d$-sphere, which is denoted by $\Hg_{S^d}^*(A,M)$.
\end{definition}

We note that when taking $M=A$, then for any $d\geq1$, the maps in low dimension are straightforward. Indeed for $0\leq n\leq d-1$, we have that
\begin{equation}\label{evenodd}
\delta_n=\begin{cases}
0 & \text{if $n$ is even,}\\
\id & \text{if $n$ is odd.}
\end{cases}
\end{equation}

As consequence of \eqref{evenodd}:

\begin{example}
When $M=A$, we have the following for any $d\geq1$:
\begin{enumerate}[(i)]
\item $\Hg_{S^d}^0(A,A)=A$,
\item $\Hg_{S^d}^n(A,A)=0$ for all $1\leq n\leq d-1$,
\item $\Hg_{S^d}^d(A,A)\cong\Ker(\delta_d)$ for when $d$ is odd, and
\item $\Hg_{S^d}^d(A,M)\cong\frac{\Ker(\delta_d)}{A}$ for when $d$ is even.
\end{enumerate}
\end{example}

Next, we recall the tertiary Hochschild cohomology of a $\mathbbm{k}$-algebra $A$. This algebra $A$ need not be commutative, unlike the case for the higher order Hochschild cohomology.  The tertiary Hochschild homology was introduced in \cite{CL}, and the cohomology is an easy adaptation, as they mentioned. For the purposes of this paper, it suffices to only consider the complex in low dimension.

\begin{definition}(\cite{CL})\label{Quin}
We call $(A,B,C,\varepsilon,\theta)$ a \textbf{quintuple} if
\begin{enumerate}[(i)]
    \item $A$ is a $\mathbbm{k}$-algebra,
    \item $B$ is a commutative $\mathbbm{k}$-algebra,
    \item $\varepsilon:B\longrightarrow A$ is a morphism of $\mathbbm{k}$-algebras such that $\varepsilon(B)\subseteq\mathcal{Z}(A)$,
    \item $C$ is a commutative $\mathbbm{k}$-algebra, and
    \item $\theta:C\longrightarrow B$ is a morphism of  $\mathbbm{k}$-algebras.
\end{enumerate}
\end{definition}

We next consider a quintuple $\mathcal{Q}=(A,B,C,\varepsilon,\theta)$, and we let $M$ be an $A$-bimodule which is $B$-symmetric (and therefore $C$-symmetric). We denote the complex
$$
0\to M\xrightarrow{\gimel^0}\Hom(A,M)\xrightarrow{\gimel^1}\Hom(A^{\otimes2}\otimes B\otimes C,M)\xrightarrow{\gimel^2}\Hom(A^{\otimes3}\otimes B^{\otimes3}\otimes C^{\otimes4},M)\xrightarrow{\gimel^3}\ldots
$$
by $\mathbf{C}^\bullet(\mathcal{Q};M)$. Again, it will be of particular interest when one takes $M=A$. First, however, we describe these maps in low dimension. As noted in \cite{CL}, one can arrange these elements to form a tetrahedron. Since working with an element expressed in three dimensions is laborious, we instead follow the arrangement in \cite{CL} and consider a two-dimensional sliced representation. For ease of notation, we will consider elements $a,b,c\in A$, $\alpha,\beta,\gamma\in B$, and $x,y,z,w\in C$:
\begin{align*}
\gimel^0(f)(a)&=af(1)-f(1)a,\\ 
\gimel^1(f)\Big(\begin{pmatrix}a\end{pmatrix}\otimes\begin{pmatrix}x & \alpha\\ & b\\\end{pmatrix}\Big)&=a\varepsilon(\alpha\theta(x))f(b)-f(ab\varepsilon(\alpha\theta(x)))+f(a)b\varepsilon(\alpha\theta(x)),
\end{align*}
and
\begin{align*}
\gimel^2(f)&\Big(\begin{pmatrix}a\\\end{pmatrix}\otimes\begin{pmatrix} x & \alpha\\ & b\\\end{pmatrix}\otimes\begin{pmatrix} y & z & \beta\\ & w & \gamma\\ & & c\end{pmatrix}\Big)\\
&=a\varepsilon(\alpha\beta\theta(xyz))f\Big(\begin{pmatrix} b\\\end{pmatrix}\otimes\begin{pmatrix} w & \gamma\\ & c\end{pmatrix}\Big)-f\Big(\begin{pmatrix} ab\varepsilon(\alpha\theta(x))\\\end{pmatrix}\otimes\begin{pmatrix} yzw & \beta\gamma\\ & c\end{pmatrix}\Big)\\
&\hspace{.25in}+f\Big(\begin{pmatrix} a\\\end{pmatrix}\otimes\begin{pmatrix} xy & \alpha\beta\theta(z)\\ & bc\varepsilon(\gamma\theta(w))\end{pmatrix}\Big)-f\Big(\begin{pmatrix} a\\\end{pmatrix}\otimes\begin{pmatrix} x & \alpha\\ & b\end{pmatrix}\Big)c\varepsilon(\beta\gamma\theta(yzw)).
\end{align*}

\begin{definition}(\cite{CL})
Let $\mathcal{Q}=(A,B,C,\varepsilon,\theta)$ be a quintuple. The cohomology of the complex $\mathbf{C}^\bullet(\mathcal{Q};M)$ is called the \textbf{tertiary Hochschild cohomology} of the quintuple $(A,B,C,\varepsilon,\theta)$ with coefficients in $M$, which is denoted by $\Hg^*((A,B,C,\varepsilon,\theta);M)$.
\end{definition}

\begin{example}(\cite{CL})\label{UST}
When $C=\mathbbm{k}$, one recovers the secondary Hochschild cohomology $\Hg^*((A,B,\varepsilon);M)$, introduced in \cite{S}. When $C=B=\mathbbm{k}$, one recovers the usual Hochschild cohomology $\Hg^*(A,M)$, as in \cite{H}.
\end{example}

\begin{example}
In low dimensions, one can see $\Hg^0((A,B,C,\varepsilon,\theta);M)=\Hg^0((A,B,\varepsilon);M)=\Hg^0(A,M)=[M,A]$ and $\Hg^1((A,B,C,\varepsilon,\theta);M)=\frac{\Der_{B,C}(A,M)}{\Inn_\mathbbm{k}(A,M)}$. Here $\Der_{B,C}(A,M)$ denotes the module of all derivations of $A$ in $M$ which are both $B$-linear and $C$-linear, and $\Inn_\mathbbm{k}(A,M)$ denotes the inner derivations.
\end{example}

\section{Higher order Hochschild cohomology}\label{SectionHigh}

For this section we fix $A$ to be commutative. Consider a $\mathbbm{k}[[t]]$-linear map $u:A[[t]]\longrightarrow A[[t]]$ determined by 
\begin{equation}\label{u}
u(a)=a+u_1(a)t+u_2(a)t^2+u_3(a)t^3+u_4(a)t^4+\cdots
\end{equation}
where each $u_i:A\longrightarrow A$ is $\mathbbm{k}$-linear.

We note that \eqref{u} was investigated in \cite{CS} and an associativity-like condition for three elements was shown to be controlled by $\Hg_{S^2}^*(A,A)$. Here, we focus on $\Hg_{S^3}^*(A,A)$, and then ultimately generalize to $\Hg_{S^d}^*(A,A)$ for any $d\geq1$.

\subsection{Modeling the 3-sphere}\label{3sphereSub}

We start by recalling the complex associated to $\Hg_{S^3}^*(A,A)$. We get
$$
0\to A\xrightarrow{\delta_0}A\xrightarrow{\delta_1}A\xrightarrow{\delta_2}\Hom(A,A)\xrightarrow{\delta_3}\Hom(A^{\otimes4},A)\xrightarrow{\delta_4}\ldots
$$
by taking $d=3$ in \eqref{SdComplex} with $M=A$. Just as in \eqref{SdMap}, we want to focus on the map $\delta_3:\Hom(A,A)\longrightarrow\Hom(A^{\otimes4},A)$. For any $\mathbbm{k}$-linear map $f:A\longrightarrow A$, we have that
$$
\delta_3(f)(a\otimes b\otimes c\otimes d)=abcf(d)-abf(cd)+af(bc)d-f(ab)cd+f(a)bcd.
$$
Next we consider two $\mathbbm{k}$-linear maps $f,g:A\longrightarrow A$. We define $f\circ g:A^{\otimes4}\longrightarrow A$ by
$$
(f\circ g)(a\otimes b\otimes c\otimes d)=f(ab)g(cd)-af(bc)g(d)-f(a)bcg(d)-f(a)g(bc)d.
$$
Furthermore, for three $\mathbbm{k}$-linear maps $f,g,h:A\longrightarrow A$, we define $f\star g\star h:A^{\otimes4}\longrightarrow A$ by
$$
(f\star g\star h)(a\otimes b\otimes c\otimes d)=-f(a)g(bc)h(d).
$$

Suppose we desire the map $u$ from \eqref{u} to satisfy the property
\begin{equation}\label{sthree}
u(ab)u(cd)=u(a)u(bc)u(d).
\end{equation}

\begin{proposition}
Let $u:A[[t]]\longrightarrow A[[t]]$ be defined as in \eqref{u}.
\begin{enumerate}[(i)]
\item If $u$ satisfies \eqref{sthree} $mod~t^2$, then $u_1\in Z_{S^3}^3(A,A)$.
\item If $u$ satisfies \eqref{sthree} $mod~t^{n+1}$, then $u$ can be extended so that it satisfies \eqref{sthree} $mod~t^{n+2}$ if and only if
$$\sum_{i+j=n+1}u_i\circ u_j+\sum_{i+j+k=n+1}u_i\star u_j\star u_k=0\in\Hg_{S^3}^4(A,A).$$
\end{enumerate}
\end{proposition}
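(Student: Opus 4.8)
The plan is to expand both sides of \eqref{sthree} as power series in $t$ and to extract the resulting constraints coefficient by coefficient. Setting $u_0=\id$ and collecting the coefficient of $t^N$ in $u(ab)u(cd)-u(a)u(bc)u(d)$ produces the cochain
\[
E_N(a\otimes b\otimes c\otimes d)=\sum_{i+j=N}u_i(ab)u_j(cd)-\sum_{i+j+k=N}u_i(a)u_j(bc)u_k(d),
\]
so that \eqref{sthree} holds $mod~t^{N+1}$ exactly when $E_0=E_1=\cdots=E_N=0$. The core of the argument is the bookkeeping identity
\[
E_N=-\delta_3(u_N)+\sum_{\substack{i+j=N\\ i,j\geq1}}u_i\circ u_j+\sum_{\substack{i+j+k=N\\ i,j,k\geq1}}u_i\star u_j\star u_k,
\]
where the sums appearing in the statement are understood to range over strictly positive indices, so that $u_0$ does not appear. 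I would prove this by isolating, inside each of the two sums defining $E_N$, the terms in which one index equals $N$ and all others vanish; using $u_0=\id$ these collapse precisely to $-\delta_3(u_N)$. The remaining terms have all indices at most $N-1$, and they reassemble into the two correction sums: the leading term of $u_i\circ u_j$ supplies the products $u_i(ab)u_j(cd)$, the other three terms of $u_i\circ u_j$ supply the triple products $u_i(a)u_j(bc)u_k(d)$ in which exactly one index equals $0$, and $u_i\star u_j\star u_k$ supplies those with all three indices positive. The definitions of $\circ$ and $\star$ are engineered exactly so that this matching is term-for-term.

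The constant term $E_0(a\otimes b\otimes c\otimes d)=ab\cdot cd-a\cdot bc\cdot d$ vanishes identically since $A$ is commutative, so the order $t^0$ condition is automatic. For part (i), I specialize the bookkeeping identity to $N=1$: both correction sums are empty, as one cannot have $i,j\geq1$ with $i+j=1$, whence $E_1=-\delta_3(u_1)$. Thus \eqref{sthree} $mod~t^2$ is equivalent to $\delta_3(u_1)=0$, that is, $u_1\in Z_{S^3}^3(A,A)=\Ker(\delta_3)$.

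For part (ii), assume \eqref{sthree} holds $mod~t^{n+1}$, so that $E_0=\cdots=E_n=0$ and $u_1,\dots,u_n$ are fixed, and write
\[
S_{n+1}=\sum_{\substack{i+j=n+1\\ i,j\geq1}}u_i\circ u_j+\sum_{\substack{i+j+k=n+1\\ i,j,k\geq1}}u_i\star u_j\star u_k,
\]
a cochain depending only on $u_1,\dots,u_n$. By the bookkeeping identity, extending $u$ so that \eqref{sthree} holds $mod~t^{n+2}$ means choosing $u_{n+1}$ with $E_{n+1}=-\delta_3(u_{n+1})+S_{n+1}=0$, that is, solving $\delta_3(u_{n+1})=S_{n+1}$. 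Such a $u_{n+1}$ exists if and only if $S_{n+1}\in\Ig(\delta_3)$; granting that $S_{n+1}$ is a $4$-cocycle, this is exactly the condition that $S_{n+1}=0$ in $\Hg_{S^3}^4(A,A)=\Ker(\delta_4)/\Ig(\delta_3)$, which is the stated obstruction.

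The step I expect to be the main obstacle is showing that $S_{n+1}$ is indeed a $4$-cocycle, $\delta_4(S_{n+1})=0$; this is what makes the class of $S_{n+1}$ in $\Hg_{S^3}^4(A,A)$ well defined and hence the equivalence meaningful. I would establish it by feeding the inductive hypothesis $E_0=\cdots=E_n=0$ into the expansion of $\delta_4(S_{n+1})$. Concretely, one applies the degree-$4$ coboundary to each $u_i\circ u_j$ and each $u_i\star u_j\star u_k$ and regroups the resulting five-variable monomials; organized by the generalized associativity encoded in \eqref{sthree}, every surviving monomial should be a multiple of some $E_m$ with $m\leq n$ and therefore vanish. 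This mirrors the mechanism used in \cite{CS} for the $S^2$ case. Although conceptually routine, this is by far the most computation-heavy part, and the difficulty lies in regrouping the coboundary terms systematically rather than expanding them blindly.
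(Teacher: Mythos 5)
Your proof is correct and follows essentially the same route as the paper: expand \eqref{sthree} in powers of $t$, recognize the coefficient of $t^N$ as $-\delta_3(u_N)$ plus the $\circ$- and $\star$-correction terms, and read off the cocycle and obstruction statements. You are in fact more systematic than the paper, which only writes out the cases $n=1$ and $n=2$ before asserting the pattern continues, and which never addresses the verification that the obstruction is a $4$-cocycle --- a point you correctly identify as the remaining computational burden.
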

\begin{proof}
First observe that satisfying \eqref{sthree} yields
$$
\big(ab+u_1(ab)t+u_2(ab)t^2+u_3(ab)t^3+\cdots\big)\big(cd+u_1(cd)t+u_2(cd)t^2+u_3(cd)t^3+\cdots\big)
$$
$$
=\big(a+u_1(a)t+u_2(a)t^2+\cdots\big)\big(bc+u_1(bc)t+u_2(bc)t^2+\cdots\big)\big(d+u_1(d)t+u_2(d)t^2+\cdots\big).
$$

For $(i)$, we notice that in order to satisfy \eqref{sthree} $mod~t^2$ we would need
$$(ab)(cd)+abu_1(cd)t+u_1(ab)cdt=a(bc)d+abcu_1(d)t+au_1(bc)dt+u_1(a)bcdt.$$
This means
$$abcu_1(d)-abu_1(cd)+au_1(bc)d-u_1(ab)cd+u_1(a)bcd=0,$$
and hence $\delta_3(u_1)(a\otimes b\otimes c\otimes d)=0$. Therefore $u_1\in\Ker(\delta_3)$ and so $u_1$ is a 3-cocycle. Thus $u_1\in Z_{S^3}^3(A,A)$.

For $(ii)$, we will show the cases in low dimension with the extension following naturally. For $n=1$, if \eqref{sthree} is satisfied $mod~t^2$, and we desire equality $mod~t^3$, then it reduces to
\begin{equation}\label{3spheret3}
\begin{gathered}
abu_2(cd)+u_2(ab)cd+u_1(ab)u_1(cd)\\
=abcu_2(d)+au_2(bc)d+u_2(a)bcd+au_1(bc)u_1(d)+u_1(a)bcu_1(d)+u_1(a)u_1(bc)d.
\end{gathered}
\end{equation}
One arranges \eqref{3spheret3} to become
\begin{equation}\label{3spheret32}
\begin{gathered}
abcu_2(d)-abu_2(cd)+au_2(bc)d-u_2(ab)cd+u_2(a)bcd\\
=u_1(ab)u_1(cd)-au_1(bc)u_1(d)-u_1(a)bcu_1(d)-u_1(a)u_1(bc)d.
\end{gathered}
\end{equation}
Writing \eqref{3spheret32} in a nice way yields $\delta_3(u_2)(a\otimes b\otimes c\otimes d)=(u_1\circ u_1)(a\otimes b\otimes c\otimes d)$. Thus, $u_1\circ u_1\in\Ig(\delta_3)$, and therefore $u_1\circ u_1=0\in\Hg_{S^3}^4(A,A)$, which was what we wanted.

Notice how the latter sum is suppressed in the case $n=1$. For $n=2$, if we suppose \eqref{sthree} is satisfied $mod~t^3$, and we desire equality $mod~t^4$, then it reduces to
\begin{equation}\label{3spheret4}
\begin{gathered}
abu_3(cd)+u_3(ab)cd+u_1(ab)u_2(cd)+u_2(ab)u_1(cd)\\
=abcu_3(d)+au_3(bc)d+u_3(a)bcd+au_1(bc)u_2(d)+u_1(a)bcu_2(d)+u_1(a)u_2(bc)d\\
+au_2(bc)u_1(d)+u_2(a)bcu_1(d)+u_2(a)u_1(bc)d+u_1(a)u_1(bc)u_1(d).
\end{gathered}
\end{equation}
Rewriting \eqref{3spheret4} yields
\begin{equation}\label{3spheret42}
\begin{gathered}
abcu_3(d)-abu_3(cd)+au_3(bc)d-u_3(ab)cd+u_3(a)bcd\\
=u_1(ab)u_2(cd)-au_1(bc)u_2(d)-u_1(a)bcu_2(d)-u_1(a)u_2(bc)d\\
+u_2(ab)u_1(cd)-au_2(bc)u_1(d)-u_2(a)bcu_1(d)-u_2(a)u_1(bc)d-u_1(a)u_1(bc)u_1(d).
\end{gathered}
\end{equation}
Notice that \eqref{3spheret42} is $\delta_3(u_3)(a\otimes b\otimes c\otimes d)=(u_1\circ u_2+u_2\circ u_1+u_1\star u_1\star u_1)(a\otimes b\otimes c\otimes d)$. Thus $u_1\circ u_2+u_2\circ u_1+u_1\star u_1\star u_1\in\Ig(\delta_3)$, and therefore $u_1\circ u_2+u_2\circ u_1+u_1\star u_1\star u_1=0\in\Hg_{S^3}^4(A,A)$, as desired.

One can continue this construction for any $n\geq1$.
\end{proof}

\subsection{Generalization}\label{dsphereSub}

Fix $d\geq1$. For $\mathbbm{k}$-linear maps $f_1,\ldots,f_m:A\longrightarrow A$, we define $f_1\circ\cdots\circ f_m:A^{\otimes d+1}\longrightarrow A$ in the natural way, where $2\leq m\leq\left\lceil\frac{d+2}{2}\right\rceil$.

Suppose we desire the map $u$ from \eqref{u} to satisfy the property
\begin{equation}\label{SdGen}
\begin{cases}
u(a_1a_2)\cdots u(a_da_{d+1})=u(a_1)u(a_2a_3)\cdots u(a_{d-1}a_d)u(a_{d+1}) & \text{if $d$ is odd,}\\
u(a_1a_2)\cdots u(a_{d-1}a_d)u(a_{d+1})=u(a_1)u(a_2a_3)\cdots u(a_da_{d+1}) & \text{if $d$ is even.}
\end{cases}
\end{equation}

\begin{theorem}\label{SdTheorem}
Fix $d\geq1$. Let $u:A[[t]]\longrightarrow A[[t]]$ be defined as in \eqref{u}.
\begin{enumerate}[(i)]
\item\label{firstthing} If $u$ satisfies \eqref{SdGen} $mod~t^2$, then $u_1\in Z_{S^d}^d(A,A)$.
\item If $u$ satisfies \eqref{SdGen} $mod~t^{n+1}$, then $u$ can be extended so that it satisfies \eqref{SdGen} $mod~t^{n+2}$ if and only if
$$\sum_{m=2}^{\left\lceil\frac{d+2}{2}\right\rceil}\left(\sum_{i_1+\cdots+i_m=n+1}u_{i_1}\circ\cdots\circ u_{i_m}\right)=0\in\Hg_{S^d}^{d+1}(A,A).$$
\end{enumerate}
\end{theorem}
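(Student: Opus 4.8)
The plan is to run the same coefficient-comparison argument used for the $3$-sphere, the only genuinely new work being the bookkeeping needed to treat arbitrary $d$ and both parities at once. In every step I would substitute \eqref{u} into each factor $u(\,\cdot\,)$ on the two sides of \eqref{SdGen} and compare coefficients of powers of $t$, using throughout that $A$ is commutative so that monomials may be reordered at will.

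For part (i) I would isolate the coefficients of $t^0$ and $t^1$. The $t^0$ coefficient is the tautology $a_1\cdots a_{d+1}=a_1\cdots a_{d+1}$ and carries no information. In the $t^1$ coefficient exactly one factor contributes a $u_1$ while the remaining factors return their degree-zero arguments; on the two sides this produces the paired terms $u_1(a_pa_{p+1})\prod_{k\neq p,p+1}a_k$ together with the outer single terms $u_1(a_1)\prod_{k\neq1}a_k$ and $u_1(a_{d+1})\prod_{k\neq d+1}a_k$. The key point is the sign bookkeeping: a short check of the factor $(-1)^{d+1-p}$ on the pair $(a_p,a_{p+1})$ in \eqref{SdMap}, together with the sign $(-1)^{d+1}$ on the outer term, against the two cases of \eqref{SdGen} shows that each term lands on exactly the side of \eqref{SdGen} on which the corresponding factor occurs, with the parity of $d$ interchanging the two cases. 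Hence the $t^1$-equation is literally $\delta_d(u_1)=0$, i.e.\ $u_1\in Z_{S^d}^d(A,A)$.

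For part (ii) I would perform the same expansion at order $t^{n+1}$, now sorting terms by the number $m$ of factors that contribute a nontrivial $u_i$ (those with $i\geq1$); a factor contributing $u_0=\id$ simply returns its argument. The $m=1$ terms are the order-$(n+1)$ analogue of part (i) and assemble into $\delta_d(u_{n+1})$. For each fixed ordered composition $i_1+\cdots+i_m=n+1$ with all $i_\ell\geq1$ and each $m\geq2$, the totality of terms in which the pieces $u_{i_1},\ldots,u_{i_m}$ are placed, in order, among the factors of the two sides of \eqref{SdGen} is precisely the definition of $u_{i_1}\circ\cdots\circ u_{i_m}$: an order-preserving placement among the left-hand factors contributes with one sign and among the right-hand factors with the opposite sign, reproducing the formulas for $f\circ g$ and $f\star g\star h$ when $d=3$. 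The range $2\leq m\leq\lceil\tfrac{d+2}{2}\rceil$ is forced because one cannot place more nontrivial pieces than there are factors on the larger side of \eqref{SdGen}; for $d$ odd that side has $\tfrac{d+3}{2}=\lceil\tfrac{d+2}{2}\rceil$ factors and the top value of $m$ receives only a right-hand contribution, exactly as the purely right-handed $\star$ operation did when $d=3$. Collecting the two pieces, the $t^{n+1}$-equation becomes
$$\delta_d(u_{n+1})=\sum_{m=2}^{\lceil\frac{d+2}{2}\rceil}\Big(\sum_{i_1+\cdots+i_m=n+1}u_{i_1}\circ\cdots\circ u_{i_m}\Big).$$

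The conclusion follows at once: an extension satisfying \eqref{SdGen} $mod~t^{n+2}$ amounts to solving this equation for $u_{n+1}$, which is possible exactly when the right-hand sum lies in $\Ig(\delta_d)$, that is, represents the zero class in $\Hg_{S^d}^{d+1}(A,A)$ (in the forward direction the sum equals $\delta_d(u_{n+1})$, so it is automatically a coboundary, and in the backward direction one takes $u_{n+1}$ to be any cochain with $\delta_d(u_{n+1})$ equal to the sum). I expect the real obstacle to lie not in this final logic but in the combinatorial claim that the multi-factor terms at order $t^{n+1}$ reorganize exactly into the operations $u_{i_1}\circ\cdots\circ u_{i_m}$ with the correct signs and the correct index range; the cleanest route is to take the order-preserving-placement recipe above as the definition of $f_1\circ\cdots\circ f_m$ and then verify that it collapses to the stated $3$-sphere operations, so that the two computations agree by construction.
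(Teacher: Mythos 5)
Your proposal is correct and follows essentially the same route as the paper: the paper's own proof is a one-sentence deferral to the explicit $d=3$ computation of Section~\ref{3sphereSub} together with \eqref{SdMap} and \eqref{SdGen}, and your coefficient-by-coefficient expansion at orders $t^1$ and $t^{n+1}$ is exactly that computation carried out for general $d$. If anything, your writeup supplies more detail than the paper does --- in particular the order-preserving-placement description of $u_{i_1}\circ\cdots\circ u_{i_m}$ and the factor count giving the range $2\leq m\leq\left\lceil\frac{d+2}{2}\right\rceil$, both of which the paper leaves implicit in the phrase ``defined in the natural way.''
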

\begin{proof}
Follows from Definition \ref{SdDefn}, the map given in \eqref{SdMap}, and the property in \eqref{SdGen}.
\end{proof}

\begin{example}\label{exdone}
For $d=1$, recall that the higher order Hochschild cohomology recovers the usual Hochschild cohomology. Therefore, one can apply Theorem \ref{SdTheorem} if one desires the map $u$ from \eqref{u} to satisfy the property $u(ab)=u(a)u(b)$.
\end{example}

\begin{example}
The case for $d=2$ recovers precisely what was done in \cite{CS}.
\end{example}

\begin{remark}
Taking $d=3$ in Section \ref{dsphereSub} reduces to what was established in Section \ref{3sphereSub}.
\end{remark}

\begin{corollary}
Fix $d\geq1$. Let $u:A[[t]]\longrightarrow A[[t]]$ be defined as in \eqref{u}. If $u$ satisfies \eqref{SdGen} $mod~t^2$, then the class of $u_1\in\Hg_{S^d}^d(A,A)$ is determined by the isomorphism class of $u$.
\end{corollary}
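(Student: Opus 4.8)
The plan is to follow the classical template recalled in the introduction for $m_t$: isomorphic deformations must have cohomologous leading terms. Recall that two maps $u,u'$ as in \eqref{u}, each satisfying \eqref{SdGen} modulo $t^2$, are isomorphic when they are related by the natural symmetries of the multiplicative relation \eqref{SdGen}, namely a formal change of coordinates together with a rescaling: there exist a $\mathbbm{k}[[t]]$-algebra automorphism $\Phi=\id+\Phi_1 t+\cdots$ of $A[[t]]$ and a unit $g_t=1+ct+\cdots\in(A[[t]])^\times$ with $u'=g_t\cdot(\Phi\circ u\circ\Phi^{-1})$. I would first check that both operations preserve \eqref{SdGen}: conjugation by the multiplicative $\Phi$ does so because $\Phi$ commutes with products and is absorbed by the substitution $a_i\mapsto\Phi^{-1}(a_i)$, while left multiplication by $g_t$ does so exactly when the two sides of \eqref{SdGen} carry the same number of $u$-factors. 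This happens precisely when $d$ is even; when $d$ is odd the two sides differ by one factor, which forces $g_t=1$ and is the mechanism behind the absence of coboundaries in that case.

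Next I would expand $u'=g_t\cdot(\Phi\circ u\circ\Phi^{-1})$ modulo $t^2$. Writing $\Phi^{-1}=\id-\Phi_1 t+\cdots$ and $u=\id+u_1 t+\cdots$, the conjugation contributes nothing at first order, since $\Phi\circ u\circ\Phi^{-1}=\id+u_1 t+O(t^2)$ with the inner $\Phi_1$-terms cancelling exactly as in the usual computation. Multiplying on the left by $g_t$ then shifts only the linear coefficient by $c$ times the zeroth-order (identity) part of $u$, giving $u_1'(a)=u_1(a)+ca$ for all $a\in A$. Thus the entire first-order discrepancy between $u'$ and $u$ is the multiplication operator $a\mapsto ca$, and by part \eqref{firstthing} of Theorem \ref{SdTheorem} both $u_1$ and $u_1'$ already lie in $Z_{S^d}^d(A,A)$, so comparing their classes is meaningful.

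It remains to recognize this operator as a $\delta_{d-1}$-coboundary. By \eqref{evenodd} and the Example following it, $\delta_{d-1}\colon A\to\Hom(A,A)$ is the zero map when $d$ is odd and the multiplication embedding $c\mapsto(a\mapsto ca)$ when $d$ is even, whose image is the copy of $A$ by which one quotients to form $\Hg_{S^d}^d(A,A)$; a one-line check against \eqref{SdMap} confirms $\delta_d\circ\delta_{d-1}=0$ in both parities. Hence $u_1'-u_1=\delta_{d-1}(c)\in\Ig(\delta_{d-1})$ in every case (with $c=0$ forced when $d$ is odd), so $u_1$ and $u_1'$ represent the same class in $\Hg_{S^d}^d(A,A)$. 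The hard part will be conceptual rather than computational: one must pin down the notion of isomorphism so that its linearization is exactly $\Ig(\delta_{d-1})$. The delicate points are verifying that conjugation cannot alter $u_1$ at first order (so that the only effective freedom is the rescaling), and that this rescaling linearizes precisely to $\delta_{d-1}$ as dictated by the parity of $d$ in \eqref{evenodd}. Once this is established, the identification of $u_1'-u_1$ with a coboundary, and therefore the equality of the classes, follows immediately.
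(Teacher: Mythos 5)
Your argument is internally coherent and your computations check out (conjugation by a multiplicative $\Phi=\id+\Phi_1t+\cdots$ contributes nothing at order $t$; left multiplication by $g_t=1+ct+\cdots$ shifts $u_1$ by $a\mapsto ca$; the factor-counting that forces $g_t=1$ when $d$ is odd and permits it when $d$ is even matches the parity behaviour of $\delta_{d-1}$ in \eqref{evenodd}). But it is a genuinely different route from the paper's, and the difference lies exactly where you flagged it: the notion of isomorphism. The paper takes the isomorphism to be a single intertwiner $f=\id+f_1t+\cdots$ with $w\circ f=f\circ u$ (its Figure of a commuting square), and the first-order comparison there gives $f_1(a)+w_1(a)=u_1(a)+f_1(a)$, i.e.\ $u_1=w_1$ on the nose; the conclusion ``$u_1-w_1\in\Ig(\delta_{d-1})$'' is then immediate because the difference is zero. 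Your version enlarges the equivalence by the unit $g_t$, which is the only way to produce a nonzero coboundary $\delta_{d-1}(c)$ and hence the only way the quotient by the copy of $A$ in the even case actually does any work; in that sense your argument better explains why the target is $\Hg_{S^d}^d(A,A)$ rather than $Z_{S^d}^d(A,A)$, but it proves a statement about a broader equivalence relation than the one the paper uses. Under the paper's definition your extra rescaling step is unnecessary (though harmless), and under your definition you would owe the reader a precise definition of ``isomorphism class of $u$'' that the paper does not supply. Neither computation is wrong; just be aware that the paper's own proof is the two-line intertwining calculation, not the linearization of a gauge group.
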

\begin{proof}
First, we know by Theorem \ref{SdTheorem}\eqref{firstthing} that $u_1$ is a $d$-cocycle. Next we consider two maps: $u(a)=a+u_1(a)t+u_2(a)t^2+\cdots$ and $w(a)=a+w_1(a)t+w_2(a)t^2+\cdots$.
Suppose that we have $f:A[[t]]\longrightarrow A[[t]]$ an isomorphism given by $f(a)=a+f_1(a)t+f_2(a)t^2+\cdots$ such that we desire
\begin{equation}\label{com}
w(f(a))=f(u(a)).
\end{equation}
In other words, the diagram in Figure \ref{figcommute} commutes.
\begin{figure}[ht]
    \centering
$$
\begin{tikzpicture}[scale=2]
\node (a) at (0,0) {$A[[t]]$};
\node (b) at (2,0) {$A[[t]]$};
\node (c) at (0,1) {$A[[t]]$};
\node (d) at (2,1) {$A[[t]]$};
\path[->,font=\small,>=angle 90]
(c) edge node [left] {$f$} (a)
(c) edge node [above] {$u$} (d)
(a) edge node [above] {$w$} (b)
(d) edge node [right] {$f$} (b);
\end{tikzpicture}
$$
    \caption{Commuting diagram}
    \label{figcommute}
\end{figure}
If \eqref{com} is satisfied $mod~t^2$, then we get that $a+f_1(a)t+w_1(a)t=a+u_1(a)t+f_1(a)t$, and hence $(u_1-w_1)(a)=0$.

Using \eqref{evenodd}, we see that when $d$ is odd, we know that $\delta_{d-1}=0$, and when $d$ is even, we know that $\delta_{d-1}=\id$. Regardless, $u_1-w_1\in\Ig(\delta_{d-1})$. This shows that $u_1$ and $w_1$ are in the same class in $\Hg_{S^d}^d(A,A)$. The result follows.
\end{proof}

Notice that all of the equalities contained in \eqref{SdGen} are independent. Observe that $u(ab)=u(a)u(b)$ (see Example \ref{exdone}) clearly implies the others, yet the converse fails. This is mainly because $u(1)$ need not equal 1. The following result generalizes the implications.

\begin{proposition}
Let $u:A[[t]]\longrightarrow A[[t]]$ be defined as in \eqref{u}. If $u$ satisfies \eqref{SdGen} for $d=n$ and for $d=m$, then $u$ satisfies \eqref{SdGen} for $d=n+m$.
\end{proposition}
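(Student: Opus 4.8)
The plan is to first recast the whole family of identities \eqref{SdGen} in a single uniform shape. For a finite string $x_1,\ldots,x_N$ of elements of $A[[t]]$, write $G(x_1,\ldots,x_N)$ for the \emph{greedy} product $u(x_1x_2)u(x_3x_4)\cdots$, in which consecutive entries are paired off from the left, a single trailing factor $u(x_N)$ being left over when $N$ is odd. Inspecting the two lines of \eqref{SdGen} shows that, in both the even and the odd case, the identity for a given $d$ is precisely
$$G(a_1,\ldots,a_{d+1}) = u(a_1)\,G(a_2,\ldots,a_{d+1});\qquad (\ast_d)$$
indeed the left-hand side of \eqref{SdGen} is the greedy product of all $d+1$ entries, while the right-hand side splits off $u(a_1)$ and greedily pairs the remaining $d$ entries. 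I would record this reformulation as the first step.

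The second ingredient is the obvious factorization of a greedy product across an even-length initial block: if $k$ is even then $G(x_1,\ldots,x_N)=G(x_1,\ldots,x_k)\,G(x_{k+1},\ldots,x_N)$, since the $k$-th entry closes off a pair and the pairing of the whole string is the concatenation of the two pairings. Note that this, together with the identities $(\ast_n)$ and $(\ast_m)$ applied to consecutive sub-blocks, are all equalities of products taken in the given order, so that only associativity, and no reordering of factors, is needed in what follows.

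With these in hand I would argue by cases on the parities of $n$ and $m$. Suppose first that $n$ is even (the case $m$ even being identical after interchanging the roles of the two hypotheses). Then factoring after the $n$-th slot gives $G(a_1,\ldots,a_{n+m+1})=G(a_1,\ldots,a_n)\,G(a_{n+1},\ldots,a_{n+m+1})$; applying $(\ast_m)$ to the second block (a window of length $m+1$) turns it into $u(a_{n+1})G(a_{n+2},\ldots,a_{n+m+1})$, and since $n$ is even the adjacent factors $G(a_1,\ldots,a_n)u(a_{n+1})$ fuse into $G(a_1,\ldots,a_{n+1})$. One application of $(\ast_n)$ then peels off $u(a_1)$, and re-fusing the two remaining greedy blocks (the first of which, $G(a_2,\ldots,a_{n+1})$, has even length $n$) yields $u(a_1)G(a_2,\ldots,a_{n+m+1})$, which is exactly $(\ast_{n+m})$.

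The only genuinely different case, and the one I expect to be the main obstacle, is when $n$ and $m$ are both odd. Now the convenient splitting point after the $n$-th slot is unavailable, and factoring after the $(n+1)$-st slot (legitimate since $n+1$ is even) leaves a stray trailing singleton once $(\ast_n)$ is applied to the front block: one reaches $u(a_1)\,G(a_2,\ldots,a_n)\,u(a_{n+1})\,G(a_{n+2},\ldots,a_{n+m+1})$, whose middle boundary no longer aligns with the target grouping. The decisive move is to read $(\ast_m)$ \emph{backwards} on the window $a_{n+1},\ldots,a_{n+m+1}$, fusing $u(a_{n+1})G(a_{n+2},\ldots,a_{n+m+1})$ back into $G(a_{n+1},\ldots,a_{n+m+1})$; since $n-1$ is even, the surviving blocks $G(a_2,\ldots,a_n)$ and $G(a_{n+1},\ldots,a_{n+m+1})$ recombine into $G(a_2,\ldots,a_{n+m+1})$, again producing $(\ast_{n+m})$. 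Verifying that every fusion and every splitting occurs at an even-length boundary is the one bookkeeping point that must be checked with care, but it is routine once the parities are tracked.
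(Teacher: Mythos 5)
Your proof is correct and follows essentially the same route as the paper's: split the concatenated product at the block boundary between the first $n+1$ and the remaining entries, apply the $d=n$ identity to the front block and the $d=m$ identity (read in the appropriate direction) to the back block, and track parities so the regroupings are legitimate. Your uniform ``greedy product'' notation and explicit treatment of the both-odd case are a cleaner packaging of the bookkeeping that the paper carries out only for $n$ odd, $m$ even, leaving the other parity cases as ``similar.''
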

\begin{proof}
First suppose $n$ is odd and $m$ is even. This, of course, implies that $n+m$ is odd. We assume that $u$ satisfies the following:
\begin{equation}\label{step1}
u(a_1a_2)\cdots u(a_na_{n+1})=u(a_1)u(a_2a_3)\cdots u(a_{n-1}a_n)u(a_{n+1}),
\end{equation}
and
\begin{equation}\label{step2}
u(a_1a_2)\cdots u(a_{m-1}a_m)u(a_{m+1})=u(a_1)u(a_2a_3)\cdots u(a_ma_{m+1}).
\end{equation}
We want to show that $u$ satisfies \eqref{SdGen} for $d=n+m$. We then observe that
\begin{align*}
u&(a_1a_2)\cdots u(a_na_{n+1})u(a_{n+2}a_{n+3})\cdots u(a_{n+m}a_{n+m+1})\\
&=u(a_1)u(a_2a_3)\cdots u(a_{n-1}a_n)u(a_{n+1})u(a_{n+2}a_{n+3})\cdots u(a_{n+m}a_{n+m+1}) \text{~~~by \eqref{step1}}\\
&=u(a_1)u(a_2a_3)\cdots u(a_{n-1}a_n)u(a_{n+1}a_{n+2})\cdots u(a_{n+m-1}a_{n+m})u(a_{n+m+1}) \text{~~~by \eqref{step2}},
\end{align*}
which was what we wanted. The cases for $n$ and $m$ both odd or both even can be done in a similar manner.
\end{proof}

\section{Tertiary Hochschild cohomology}\label{SectionTert}

In this section we impose nontrivial $B$-algebra and $C$-algebra structures on $A$ and establish a deformation theory corresponding to it. This is similar to what was done in \cite{S}.

First we let $\mathcal{Q}=(A,B,C,\varepsilon,\theta)$ be a quintuple. Note that here $A$ is not necessarily commutative. Since $\mathcal{Q}$ is a quintuple, notice that this induces a $B$-algebra structure on $A$ by way of the morphism $\varepsilon:B\longrightarrow A$, and it also induces a $C$-algebra structure on $A$ via the morphism $\varepsilon\circ\theta:C\longrightarrow A$ (see Definition \ref{Quin}).

Next for each $\alpha\in B$ and $x\in C$ we have a map $m_\alpha^x:A\otimes A\longrightarrow A$ given by $m_\alpha^x(a\otimes b)=ab\varepsilon(\alpha)\varepsilon(\theta(x))$, where $a,b\in A$. One can verify that the following are easily satisfied for any $q\in\mathbbm{k}$, $a,b,c\in A$, $\alpha,\beta,\gamma\in B$, and $x,y,z,w\in C$:
\begin{equation}\label{stuff}
\begin{gathered}
m_{\alpha+\beta}^x(a\otimes b)=m_\alpha^x(a\otimes b)+m_\beta^x(a\otimes b),\\
m_\alpha^{x+y}(a\otimes b)=m_\alpha^x(a\otimes b)+m_\alpha^y(a\otimes b),\\
m_{q\alpha}^x(a\otimes b)=qm_\alpha^x(a\otimes b)=m_\alpha^{qx}(a\otimes b),\\
m_{\alpha\beta\theta(z)}^{xy}(a\otimes m_\gamma^w(b\otimes c))=m_{\beta\gamma}^{xyz}(m_\alpha^x(a\otimes b)\otimes c).
\end{gathered}
\end{equation}

Conversely, now we suppose that $\theta:B\longrightarrow C$ is a morphism of commutative $\mathbbm{k}$-algebras, and $A$ is a $\mathbbm{k}$-vector space. Further suppose that we have a family of products $\mathcal{M}_\mathcal{Q}=\{m_\alpha^x:A\otimes A\longrightarrow A~:~\alpha\in B\text{~and~}x\in C\}$ such that $\mathcal{M}_\mathcal{Q}$ satisfies the conditions in \eqref{stuff}. Finally, suppose that $(A,m_1^1)$ is a $\mathbbm{k}$-algebra with the identity element $1_A\in A$.

One can see that $\varepsilon_{\mathcal{M}_\mathcal{Q}}:B\longrightarrow A$ given by $\varepsilon_{\mathcal{M}_\mathcal{Q}}(\alpha)=m_\alpha^1(1_A\otimes1_A)$ and $\varepsilon_{\mathcal{M}_\mathcal{Q}}\circ\theta:C\longrightarrow A$ given by $(\varepsilon_{\mathcal{M}_\mathcal{Q}}\circ\theta)(x)=m_1^x(1_A\otimes1_A)$ are both morphisms of $\mathbbm{k}$-algebras such that $\varepsilon_{\mathcal{M}_\mathcal{Q}}(B)\subseteq\mathcal{Z}(A)$ and $(\varepsilon_{\mathcal{M}_\mathcal{Q}}\circ\theta)(C)\subseteq\mathcal{Z}(A)$, respectively. In particular, both of these maps respect sums, scalars, products, and the identity. As consequence, one has the following:

\begin{proposition}
Consider a morphism $\theta:B\longrightarrow C$ of commutative $\mathbbm{k}$-algebras and a family of products $\mathcal{M}_\mathcal{Q}=\{m_\alpha^x:A\otimes A\longrightarrow A~:~\alpha\in B\text{~and~}x\in C\}$ such that $(A,m_1^1)$ is a $\mathbbm{k}$-algebra with unit $1_A\in A$. Then \eqref{stuff} holds if and only if $\varepsilon_{\mathcal{M}_\mathcal{Q}}:B\longrightarrow A$ given by $\varepsilon_{\mathcal{M}_\mathcal{Q}}(\alpha)=m_\alpha^1(1_A\otimes1_A)$ and $\varepsilon_{\mathcal{M}_\mathcal{Q}}\circ\theta:C\longrightarrow A$ given by $(\varepsilon_{\mathcal{M}_\mathcal{Q}}\circ\theta)(x)=m_1^x(1_A\otimes1_A)$ give a $B$-algebra and $C$-algebra structure on $A$, respectively.
\end{proposition}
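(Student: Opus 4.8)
The statement is an equivalence that repackages the two computations flanking it, so the plan is to treat the two implications in turn. Throughout I write $\cdot := m_1^1$ for the multiplication of the base algebra, and I set $e(\alpha):=m_\alpha^1(1_A\otimes1_A)$ and $f(x):=m_1^x(1_A\otimes1_A)$ for the two maps appearing in the statement; I use freely that $\theta$ and $m_1^1$ preserve units. Part of the forward direction will be to confirm that $f$ is indeed the composite $\varepsilon_{\mathcal{M}_\mathcal{Q}}\circ\theta$.

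For the implication assuming that $e$ and $f$ furnish the $B$- and $C$-algebra structures, the products are then the canonical ones attached to the quintuple $(A,B,C,e,\theta)$, namely $m_\alpha^x(a\otimes b)=a\cdot b\cdot e(\alpha)\cdot f(x)$ with $e(B),f(C)\subseteq\mathcal{Z}(A)$. For products of this shape the four relations of \eqref{stuff} fall out immediately: the first three from bilinearity of $\cdot$ together with the additivity and $\mathbbm{k}$-linearity of $e$ and $f$, and the associativity-type relation from associativity of $\cdot$ and the multiplicativity and centrality of the factors $e(\alpha)$ and $f(x)$. This is exactly the verification described as ``easily satisfied'' at the start of the section.

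For the converse I assume \eqref{stuff} and must exhibit $e$ and $f$ as unital, central morphisms of $\mathbbm{k}$-algebras. Additivity, $\mathbbm{k}$-linearity, and unitality drop out of the first three relations of \eqref{stuff} evaluated at $1_A\otimes1_A$ (with $e(1)=m_1^1(1_A\otimes1_A)=1_A$), and the identification $f=e\circ\theta$ claimed in the statement follows from a single specialization of the associativity-type relation yielding $m_1^x(1_A\otimes1_A)=m_{\theta(x)}^1(1_A\otimes1_A)$. The substantive content is multiplicativity and centrality, and the engine for both is the reconstruction identity $m_\alpha^x(a\otimes b)=a\cdot b\cdot e(\alpha)\cdot f(x)$, which I would extract from the associativity-type relation by iterated specialization: setting the auxiliary entries and parameters to the appropriate units collapses one factor to $m_1^1$ and, via $m_1^1(a\otimes b)=a\cdot b$, absorbs the tensor argument, while a further specialization isolates the central elements $e(\alpha)$ and $f(x)$. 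Granting this, multiplicativity $e(\alpha\beta)=e(\alpha)e(\beta)$ comes from comparing $e(\alpha\beta)=m_\beta^1(e(\alpha)\otimes1_A)$ with the special case $m_\beta^1(y\otimes1_A)=y\cdot e(\beta)$, and centrality follows from comparing the two placements $m_\beta^1(1_A\otimes a)=e(\beta)\cdot a$ and $m_\beta^1(a\otimes1_A)=a\cdot e(\beta)$, which the relation forces to agree; the statements for $f$ are obtained symmetrically through $\theta$.

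I expect the reconstruction identity to be the main obstacle, since it is the one place where the associativity-type relation of \eqref{stuff} must be exploited in a genuinely combinatorial way rather than read off directly. It is precisely what bridges the pointwise data of $e$ and $f$ --- visible only through their values at $1_A\otimes1_A$ --- and the full bilinear family $\{m_\alpha^x\}$. Everything else is routine specialization; the only real care needed is to keep the $B$-subscripts, the $C$-superscripts, and the connecting map $\theta$ in their correct roles, so that the properties of $e$ and of $f$ are read off from the right collapses of the relation.
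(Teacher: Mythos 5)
Your proposal is correct and follows essentially the same route as the paper, whose proof is simply the two-sided discussion preceding the proposition: the canonical products of a quintuple satisfy \eqref{stuff}, and conversely a family satisfying \eqref{stuff} yields unital, central algebra morphisms via evaluation at $1_A\otimes 1_A$. Your identification of the reconstruction identity as the real content of the converse is apt, and the iterated specializations you sketch (collapsing parameters to units in the fourth relation of \eqref{stuff}) do deliver it, along with centrality and $f=e\circ\theta$, exactly as you describe.
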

\begin{proof}
Follows from the above discussion.
\end{proof}

\subsection{A deformation theory}

Let $\mathcal{Q}=(A,B,C,\varepsilon,\theta)$ be a quintuple. Suppose that for each $i\geq1$ we have a $\mathbbm{k}$-linear map $c_i:A^{\otimes2}\otimes B\otimes C\longrightarrow A$. For each $\alpha\in B$ and $x\in C$, we define $m_{\alpha,t}^x:A[[t]]\otimes A[[t]]\longrightarrow A[[t]]$ determined by
\begin{equation}\label{TertProd}
m_{\alpha,t}^x(a\otimes b)=ab\varepsilon(\alpha\theta(x))+c_1\Big(\begin{pmatrix}a\end{pmatrix}\otimes\begin{pmatrix} x & \alpha\\ & b\\\end{pmatrix}\Big)t+c_2\Big(\begin{pmatrix}a\end{pmatrix}\otimes\begin{pmatrix} x & \alpha\\ & b\\\end{pmatrix}\Big)t^2+\cdots
\end{equation}
where $a,b\in A$. Suppose we desire the family of products $\mathcal{M}_\mathcal{Q}=\{m_{\alpha,t}^x~:~\alpha\in B\text{~and~}x\in C\}$ to satisfy the following associativity condition:
\begin{equation}\label{TertAss}
m_{\alpha\beta\theta(z),t}^{xy}(a\otimes m_{\gamma,t}^w(b\otimes c))=m_{\beta\gamma,t}^{yzw}(m_{\alpha,t}^x(a\otimes b)\otimes c),
\end{equation}
where $a,b,c\in A$, $\alpha,\beta,\gamma\in B$, and $x,y,z,w\in C$.

\begin{remark}\label{UST2}
Taking $C=\mathbbm{k}$, one recovers the family of products $\mathcal{M}$ discussed in \cite{S}, whereas taking $C=B=\mathbbm{k}$ reduces to the usual product $m_t$ studied in \cite{Ger}.
\end{remark}

For $\mathbbm{k}$-linear maps $f,g:A^{\otimes2}\otimes B\otimes C\longrightarrow A$, we define $f\circ g:A^{\otimes3}\otimes B^{\otimes3}\otimes C^{\otimes4}\longrightarrow A$ by the following:
$$
(f\circ g)\Big(\begin{pmatrix}a\\\end{pmatrix}\otimes\begin{pmatrix} x & \alpha\\ & b\\\end{pmatrix}\otimes\begin{pmatrix} y & z & \beta\\ & w & \gamma\\ & & c\end{pmatrix}\Big)
$$
$$
=f\Big(\begin{pmatrix}g\Big(\begin{pmatrix}a\end{pmatrix}\otimes\begin{pmatrix} x & \alpha\\ & b\\\end{pmatrix}\Big)\end{pmatrix}\otimes\begin{pmatrix} yzw & \beta\gamma\\ & c\\\end{pmatrix}\Big)-f\Big(\begin{pmatrix}a\end{pmatrix}\otimes\begin{pmatrix} xy & \alpha\beta\theta(z)\\ & g\Big(\begin{pmatrix}b\end{pmatrix}\otimes\begin{pmatrix} w & \gamma\\ & c\\\end{pmatrix}\Big)\end{pmatrix}\Big).
$$

\begin{theorem}\label{TertBig}
Let $\mathcal{Q}=(A,B,C,\varepsilon,\theta)$ be a quintuple. Suppose $\mathcal{M}_\mathcal{Q}=\{m_{\alpha,t}^x~:~\alpha\in B\text{~and~}x\in C\}$ is the family of products defined as in \eqref{TertProd}.
\begin{enumerate}[(i)]
\item\label{TertFirst} If the family of products $\mathcal{M}_\mathcal{Q}$ satisfies \eqref{TertAss} $mod~t^2$, then $c_1\in Z^2((A,B,C,\varepsilon,\theta);A)$.
\item If the family of products $\mathcal{M}_\mathcal{Q}$ satisfies \eqref{TertAss} $mod~t^{n+1}$, then $\mathcal{M}_\mathcal{Q}$ can be extended so that the family of products satisfies \eqref{TertAss} $mod~t^{n+2}$ if and only if
$$
\sum_{i+j=n+1}c_i\circ c_j=0\in\Hg^3((A,B,C,\varepsilon,\theta);A).
$$
\end{enumerate}
\end{theorem}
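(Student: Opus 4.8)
The plan is to regard \eqref{TertAss} as an equality of formal power series in $t$ and to compare coefficients, exactly as in the model computation for the $3$-sphere in Section~\ref{3sphereSub}. Expanding the inner products as $m_{\gamma,t}^{w}(b\otimes c)=\sum_{j\geq0}P_jt^j$ and $m_{\alpha,t}^{x}(a\otimes b)=\sum_{i\geq0}R_it^i$, with $P_0=bc\varepsilon(\gamma\theta(w))$, $R_0=ab\varepsilon(\alpha\theta(x))$, and $P_j,R_j$ (for $j\geq1$) the corresponding values of $c_j$, I would substitute these into the two outer products. Since each $c_i$ is $\mathbbm{k}$-linear in every argument and $\varepsilon(B)\subseteq\mathcal{Z}(A)$, both sides become honest power series whose degree-zero coefficients agree automatically by the base relation in \eqref{stuff}. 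The whole content of \eqref{TertAss} is thereby encoded, for each $N\geq1$, in the vanishing of the coefficient of $t^N$ in the difference of the two sides.

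The key observation is that this $t^N$-coefficient splits cleanly into a part linear in $c_N$ and a part bilinear in the lower maps. For part (i) one takes $N=1$: there are no bilinear terms, the hypothesis that \eqref{TertAss} holds $mod~t^2$ forces the coefficient to vanish, and after moving the central $\varepsilon$-factors the four surviving summands are precisely those defining $\gimel^2(c_1)$; hence $\gimel^2(c_1)=0$, that is, $c_1\in\Ker(\gimel^2)=Z^2((A,B,C,\varepsilon,\theta);A)$. For general $N=n+1$ the $c_{n+1}$-linear terms reassemble into $\gimel^2(c_{n+1})$, while each pairing of a $c_i$ arising from an inner product with a $c_j$ from the outer product yields exactly the two summands in the definition of $c_i\circ c_j$. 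Collecting everything, the coefficient of $t^{n+1}$ equals $\gimel^2(c_{n+1})-\sum_{i+j=n+1}c_i\circ c_j$, so the family can be extended one further step precisely when
$$
\gimel^2(c_{n+1})=\sum_{i+j=n+1}c_i\circ c_j
$$
is solvable for $c_{n+1}$, i.e. precisely when $\sum_{i+j=n+1}c_i\circ c_j\in\Ig(\gimel^2)$.

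To upgrade this to the stated cohomological criterion I must know that the obstruction $\sum_{i+j=n+1}c_i\circ c_j$ is a $3$-cocycle, since only then does it represent a class in $\Hg^3((A,B,C,\varepsilon,\theta);A)$ and only then does ``lies in $\Ig(\gimel^2)$'' coincide with ``vanishes in $\Hg^3$.'' I expect this to be the main obstacle. The argument should exploit the standing hypothesis that \eqref{TertAss} holds $mod~t^{n+1}$, which by the previous paragraph reads $\gimel^2(c_N)=\sum_{i+j=N}c_i\circ c_j$ for every $N\leq n$; substituting these lower relations into $\gimel^3\big(\sum_{i+j=n+1}c_i\circ c_j\big)$ and cancelling should return $0$, as in Gerstenhaber's and Staic's treatments. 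The genuine difficulty is bookkeeping: because $A$ need not be commutative the order of the factors must be tracked at every step, and the several $B$- and $C$-entries interact through $\theta$ and through the centrality of $\varepsilon$, so confirming the cancellations in $\gimel^3$ of the composites $c_i\circ c_j$ is delicate. Granting this cocycle property, the equivalence in part (ii) is immediate, while part (i) was already secured above by matching the coefficient of $t^1$.
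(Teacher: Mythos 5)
Your proposal follows essentially the same route as the paper: expand both sides of \eqref{TertAss} as power series in $t$, match the coefficient of $t$ to get $\gimel^2(c_1)=0$ for part (i), and for part (ii) identify the coefficient of $t^{n+1}$ as $\gimel^2(c_{n+1})-\sum_{i+j=n+1}c_i\circ c_j$, so that extendability is equivalent to the obstruction being a coboundary. You are in fact more careful than the paper on one point: you correctly flag that the ``if'' direction needs the obstruction $\sum_{i+j=n+1}c_i\circ c_j$ to be a $3$-cocycle (so that vanishing in $\Hg^3$ is equivalent to lying in $\Ig(\gimel^2)$), a verification the paper omits entirely and which you only sketch; carrying it out along the Gerstenhaber--Staic lines you indicate would make your argument strictly more complete than the published one.
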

\begin{proof}
For $(i)$, in order to satisfy \eqref{TertAss} $mod~t^2$ we need
$$
c_1\Big(\begin{pmatrix}a\end{pmatrix}\otimes\begin{pmatrix} xy & \alpha\beta\theta(z)\\ & bc\varepsilon(\gamma\theta(w))\\\end{pmatrix}\Big)+a\varepsilon(\alpha\beta\theta(xyz))c_1\Big(\begin{pmatrix}b\end{pmatrix}\otimes\begin{pmatrix} w & \gamma\\ & c\\\end{pmatrix}\Big)
$$
$$
=c_1\Big(\begin{pmatrix}ab\varepsilon(\alpha\theta(x))\end{pmatrix}\otimes\begin{pmatrix} yzw & \beta\gamma\\ & c\\\end{pmatrix}\Big)+c_1\Big(\begin{pmatrix}a\end{pmatrix}\otimes\begin{pmatrix} x & \alpha\\ & b\\\end{pmatrix}\Big)c\varepsilon(\beta\gamma\theta(yzw))
$$
which can be rearranged as
$$
a\varepsilon(\alpha\beta\theta(xyz))c_1\Big(\begin{pmatrix} b\\\end{pmatrix}\otimes\begin{pmatrix} w & \gamma\\ & c\end{pmatrix}\Big)-c_1\Big(\begin{pmatrix} ab\varepsilon(\alpha\theta(x))\\\end{pmatrix}\otimes\begin{pmatrix} yzw & \beta\gamma\\ & c\end{pmatrix}\Big)
$$
$$
+c_1\Big(\begin{pmatrix} a\\\end{pmatrix}\otimes\begin{pmatrix} xy & \alpha\beta\theta(z)\\ & bc\varepsilon(\gamma\theta(w))\end{pmatrix}\Big)-c_1\Big(\begin{pmatrix} a\\\end{pmatrix}\otimes\begin{pmatrix} x & \alpha\\ & b\end{pmatrix}\Big)c\varepsilon(\beta\gamma\theta(yzw))=0.
$$
Hence $\gimel^2(c_1)=0$, and so $c_1\in\Ker(\gimel^2)$. This implies that $c_1$ is a 2-cocycle, and thus $c_1\in Z^2((A,B,C,\varepsilon,\theta);A)$.

For $(ii)$, we consider the case $n=1$. Assuming the family of products $\mathcal{M}_\mathcal{Q}$ satisfies \eqref{TertAss} $mod~t^2$, if we further desire $\mathcal{M}_\mathcal{Q}$ to satisfy \eqref{TertAss} $mod~t^3$, we then get that
\begin{equation}\label{TertExtend}
\begin{gathered}
c_2\Big(\begin{pmatrix}a\end{pmatrix}\otimes\begin{pmatrix} xy & \alpha\beta\theta(z)\\ & bc\varepsilon(\gamma\theta(w))\\\end{pmatrix}\Big)+c_1\Big(\begin{pmatrix}a\end{pmatrix}\otimes\begin{pmatrix} xy & \alpha\beta\theta(z)\\ & c_1\Big(\begin{pmatrix}b\end{pmatrix}\otimes\begin{pmatrix} w & \gamma\\ & c\\\end{pmatrix}\Big)\end{pmatrix}\Big)\\
+a\varepsilon(\alpha\beta\theta(xyz))c_2\Big(\begin{pmatrix}b\end{pmatrix}\otimes\begin{pmatrix} w & \gamma\\ & c\\\end{pmatrix}\Big)=c_2\Big(\begin{pmatrix}ab\varepsilon(\alpha\theta(x))\end{pmatrix}\otimes\begin{pmatrix} yzw & \beta\gamma\\ & c\\\end{pmatrix}\Big)\\
+c_1\Big(\begin{pmatrix}c_1\Big(\begin{pmatrix}a\end{pmatrix}\otimes\begin{pmatrix} x & \alpha\\ & b\\\end{pmatrix}\Big)\end{pmatrix}\otimes\begin{pmatrix} yzw & \beta\gamma\\ & c\\\end{pmatrix}\Big)+c_2\Big(\begin{pmatrix}a\end{pmatrix}\otimes\begin{pmatrix} x & \alpha\\ & b\\\end{pmatrix}\Big)c\varepsilon(\beta\gamma\theta(yzw)).
\end{gathered}
\end{equation}
Notice that \eqref{TertExtend} can be rewritten as $\gimel^2(c_2)=c_1\circ c_1$, which describes the obstruction. Hence since $c_1\circ c_1\in\Ig(\gimel^2)$, then $c_1\circ c_1=0\in\Hg^3((A,B,C,\varepsilon,\theta);A)$. Observe that one can do this for any $n\geq1$ in order to extend associativity of the family of products $\mathcal{M}_\mathcal{Q}$, as is traditional for these types of deformation arguments.
\end{proof}

\begin{remark}
As one would expect based on Example \ref{UST} and Remark \ref{UST2}, we see that Theorem \ref{TertBig} reduces to the known deformation theory results corresponding to the secondary and usual Hochschild cohomologies found in \cite{S} and \cite{Ger} when one takes $C=\mathbbm{k}$ and $C=B=\mathbbm{k}$, respectively.
\end{remark}

\begin{corollary}
Let $\mathcal{Q}=(A,B,C,\varepsilon,\theta)$ be a quintuple. Suppose $\mathcal{M}_\mathcal{Q}=\{m_{\alpha,t}^x~:~\alpha\in B\text{~and~}x\in C\}$ is the family of products defined as in \eqref{TertProd}. If the family of products $\mathcal{M}_\mathcal{Q}$ satisfies \eqref{TertAss} $mod~t^2$, then the class of $c_1\in\Hg^2((A,B,C,\varepsilon,\theta);A)$ is determined by the isomorphism class of $\mathcal{M}_\mathcal{Q}$.
\end{corollary}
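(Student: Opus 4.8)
The plan is to adapt the argument from the higher order case above to the present product setting. First I would invoke Theorem \ref{TertBig}\eqref{TertFirst}: the standing hypothesis that $\mathcal{M}_\mathcal{Q}$ satisfies \eqref{TertAss} $mod~t^2$ already forces $c_1\in Z^2((A,B,C,\varepsilon,\theta);A)$, so its class in $\Hg^2((A,B,C,\varepsilon,\theta);A)$ is well defined. It then remains only to check that this class is insensitive to replacing $\mathcal{M}_\mathcal{Q}$ by an isomorphic family.

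Next I would fix the meaning of isomorphism. Given a second family $\mathcal{M}_\mathcal{Q}'=\{n_{\alpha,t}^x~:~\alpha\in B\text{~and~}x\in C\}$ with $n_{\alpha,t}^x(a\otimes b)=ab\varepsilon(\alpha\theta(x))+c_1'(\cdots)t+\cdots$, I would call it isomorphic to $\mathcal{M}_\mathcal{Q}$ if there is a $\mathbbm{k}[[t]]$-linear isomorphism $\phi:A[[t]]\longrightarrow A[[t]]$ of the form $\phi(a)=a+\phi_1(a)t+\phi_2(a)t^2+\cdots$ intertwining the two families:
$$
\phi\big(m_{\alpha,t}^x(a\otimes b)\big)=n_{\alpha,t}^x\big(\phi(a)\otimes\phi(b)\big)
$$
for all $\alpha\in B$, $x\in C$, and $a,b\in A$. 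This is the natural analogue of Gerstenhaber's equivalence of deformations and specializes to it when $C=B=\mathbbm{k}$ (cf.\ Remark \ref{UST2}).

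The substance of the proof is then a comparison of the coefficients of $t$ in the intertwining relation $mod~t^2$. Expanding the left side, $\phi$ contributes $c_1(\cdots)+\phi_1\big(ab\varepsilon(\alpha\theta(x))\big)$ at order $t$; expanding the right side in the deformed arguments $\phi(a),\phi(b)$ contributes $c_1'(\cdots)+\big(a\phi_1(b)+\phi_1(a)b\big)\varepsilon(\alpha\theta(x))$. Equating these and rearranging yields
$$
(c_1-c_1')\Big(\begin{pmatrix}a\end{pmatrix}\otimes\begin{pmatrix}x & \alpha\\ & b\end{pmatrix}\Big)=a\phi_1(b)\varepsilon(\alpha\theta(x))-\phi_1\big(ab\varepsilon(\alpha\theta(x))\big)+\phi_1(a)b\varepsilon(\alpha\theta(x)).
$$

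The step I expect to demand the most care is recognizing the right-hand side as precisely $\gimel^1(\phi_1)$ evaluated at the same argument. Comparing with the formula for $\gimel^1$, the only apparent discrepancy is that $\gimel^1(\phi_1)$ carries the term $a\varepsilon(\alpha\theta(x))\phi_1(b)$ whereas the computation above produces $a\phi_1(b)\varepsilon(\alpha\theta(x))$; these agree exactly because $\varepsilon(B)\subseteq\mathcal{Z}(A)$ by Definition \ref{Quin}, so $\varepsilon(\alpha\theta(x))$ is central and may be slid past $\phi_1(b)$. Granting this identification, we obtain $c_1-c_1'=\gimel^1(\phi_1)\in\Ig(\gimel^1)$, so $c_1$ and $c_1'$ determine the same class in $\Hg^2((A,B,C,\varepsilon,\theta);A)$, which is the claim.
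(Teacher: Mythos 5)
Your proposal is correct and follows essentially the same route as the paper: invoke Theorem \ref{TertBig}\eqref{TertFirst} for cocycleness, impose the intertwining relation \eqref{com2} for an isomorphism $f(a)=a+f_1(a)t+\cdots$, compare coefficients of $t$, and recognize the difference of the degree-one coefficients as $\gimel^1(f_1)$. Your explicit remark that $\varepsilon(\alpha\theta(x))$ is central (so the terms match the formula for $\gimel^1$) is a detail the paper passes over silently, but the argument is the same.
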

\begin{proof}
First, we know by Theorem \ref{TertBig}\eqref{TertFirst} that $c_1$ is a $2$-cocycle. Next we consider two families of products $\{m_{\alpha,t}^x~:~\alpha\in B\text{~and~}x\in C\}$ and $\{p_{\alpha,t}^x~:~\alpha\in B\text{~and~}x\in C\}$:
$$
m_{\alpha,t}^x(a\otimes b)=ab\varepsilon(\alpha\theta(x))+c_1\Big(\begin{pmatrix}a\end{pmatrix}\otimes\begin{pmatrix} x & \alpha\\ & b\\\end{pmatrix}\Big)t+c_2\Big(\begin{pmatrix}a\end{pmatrix}\otimes\begin{pmatrix} x & \alpha\\ & b\\\end{pmatrix}\Big)t^2+\cdots
$$
and
$$
p_{\alpha,t}^x(a\otimes b)=ab\varepsilon(\alpha\theta(x))+d_1\Big(\begin{pmatrix}a\end{pmatrix}\otimes\begin{pmatrix} x & \alpha\\ & b\\\end{pmatrix}\Big)t+d_2\Big(\begin{pmatrix}a\end{pmatrix}\otimes\begin{pmatrix} x & \alpha\\ & b\\\end{pmatrix}\Big)t^2+\cdots.
$$
Suppose $f:A[[t]]\longrightarrow A[[t]]$ is an isomorphism given by $f(a)=a+f_1(a)t+f_2(a)t^2+\cdots$ such that we desire
\begin{equation}\label{com2}
p_{\alpha,t}^x(f(a)\otimes f(b))=f(m_{\alpha,t}^x(a\otimes b)).
\end{equation}
Equivalently, the diagram in Figure \ref{figcommute2} commutes.
\begin{figure}[ht]
    \centering
$$
\begin{tikzpicture}[scale=2]
\node (a) at (0,0) {$A[[t]]\otimes A[[t]]$};
\node (b) at (2,0) {$A[[t]]$};
\node (c) at (0,1) {$A[[t]]\otimes A[[t]]$};
\node (d) at (2,1) {$A[[t]]$};
\path[->,font=\small,>=angle 90]
(c) edge node [left] {$f\otimes f$} (a)
(c) edge node [above] {$m_{\alpha,t}^x$} (d)
(a) edge node [above] {$p_{\alpha,t}^x$} (b)
(d) edge node [right] {$f$} (b);
\end{tikzpicture}
$$
    \caption{Commuting diagram}
    \label{figcommute2}
\end{figure}
If \eqref{com2} is satisfied $mod~t^2$, then we get
\begin{equation}\label{TertClass}
\begin{gathered}
ab\varepsilon(\alpha\theta(x))+d_1\Big(\begin{pmatrix}a\end{pmatrix}\otimes\begin{pmatrix} x & \alpha\\ & b\\\end{pmatrix}\Big)t+a\varepsilon(\alpha\theta(x))f_1(b)t+f_1(a)b\varepsilon(\alpha\theta(x))t\\
=ab\varepsilon(\alpha\theta(x))+f_1(ab\varepsilon(\alpha\theta(x)))t+c_1\Big(\begin{pmatrix}a\end{pmatrix}\otimes\begin{pmatrix} x & \alpha\\ & b\\\end{pmatrix}\Big)t.
\end{gathered}
\end{equation}
Rearranging \eqref{TertClass} yields
\begin{equation}\label{TertClass2}
\begin{gathered}
a\varepsilon(\alpha\theta(x))f_1(b)-f_1(ab\varepsilon(\alpha\theta(x)))+f_1(a)b\varepsilon(\alpha\theta(x))\\
=c_1\Big(\begin{pmatrix}a\end{pmatrix}\otimes\begin{pmatrix} x & \alpha\\ & b\\\end{pmatrix}\Big)-d_1\Big(\begin{pmatrix}a\end{pmatrix}\otimes\begin{pmatrix} x & \alpha\\ & b\\\end{pmatrix}\Big).
\end{gathered}
\end{equation}
One can then rewrite \eqref{TertClass2} as $c_1-d_1=\gimel^1(f_1)$. This shows that $c_1-d_1\in\Ig(\gimel^1)$ and hence $c_1$ and $d_1$ are in the same class in $\Hg^2((A,B,C,\varepsilon,\theta);A)$.
\end{proof}

\subsection{Extensions and remarks}

One observes that the tertiary Hochschild cohomology controls a deformation theory on $A[[t]]$ that has both nontrivial $B$-algebra and $C$-algebra structures. However, as mentioned in \cite{CL}, there is nothing special about the tertiary Hochschild cohomology. As one could imagine, one can extend to a so-called quaternary Hochschild cohomology to investigate deformations of $A[[t]]$ that have three additional algebra structures (coming from $B$, $C$, and $D$, say). In short, however finitely many distinct nontrivial algebra structures that one desires to impose on $A[[t]]$, one can conceivably devise the appropriate generalized Hochschild cohomology that will control that deformation theory. 


\end{document}